\documentclass[a4paper]{amsart}
\usepackage[utf8]{inputenc}
\usepackage{amssymb,todonotes,xfrac,url}
\theoremstyle{plain}
\newtheorem{thm}{Theorem}
\newtheorem{cor}[thm]{Corollary}
\newtheorem{lem}[thm]{Lemma}
\newtheorem{obs}[thm]{Observation}
\theoremstyle{remark}

\newcommand{\st}{\;\mid\;}
\newcommand{\legendre}[2]{(\tfrac{#1}{#2})}
\newcommand{\divides}{\mid}
\newcommand{\ndivides}{\nmid}
\newcommand{\CL}{\mathcal{L}}
\newcommand{\FF}{\mathbb{F}}

\newcommand{\og}{\overline{g}}
\newcommand{\OA}{\overline{A}}
\newcommand{\OB}{\overline{B}}
\newcommand{\OG}{\overline{G}}
\DeclareMathOperator{\rank}{rank}
\def\clap#1{\hbox to 0pt{\hss#1\hss}}

\def\mathclap{\mathpalette\mathclapinternal}

\def\mathclapinternal#1#2{\clap{$\mathsurround=0pt#1{#2}$}}
\title[Yet another proof of the quadratic reciprocity law]{Yet another proof\\of the quadratic reciprocity law}
\author[A. Czoga{\l}a \and P. Koprowski]{Alfred Czoga\l a \and Przemys{\l}aw Koprowski}
\address{Institute of Mathematics\\
University of Silesia\\ Bankowa 14\\ 40-007 Katowice,
Poland} \email{alfred.czogala@us.edu.pl}
\address{Institute of Mathematics\\
University of Silesia\\ Bankowa 14\\ 40-007 Katowice,
Poland} \email{przemyslaw.koprowski@us.edu.pl}
\begin{document}
\begin{abstract}
We present a new proof of the celebrated quadratic reciprocity law. Our proof is based on group theory.
\end{abstract}
\keywords{quadratic reciprocity law}
\subjclass[2010]{11A15, 20K01}
\maketitle
Among all mathematical results it is the quadratic reciprocity law which possibly has the highest number of published proofs. The web page \begin{center}\url{http://www.rzuser.uni-heidelberg.de/~hb3/fchrono.html}\end{center} lists a total of 246 (at the time of writing) distinct proofs. In this paper we present yet another proof. The idea of our proof is to some extent inspired by Rousseau's proof \cite{Rousseau91}. 

Our proof of the quadratic reciprocity law is based on some basic facts from group theory. If $(G,+)$ is a finite abelian group, then the quotient group $\sfrac{G}{2G}$ can be treated as a linear space over~$\FF_2$. Recall that $2$-rank of~$G$, denoted $\rank_2(G)$, is the dimension of this vector space. Equivalently, since every finite abelian group is a direct sum of cyclic groups, the $2$-rank of~$G$ is the number of cyclic summands of even orders. Denote by~$G_2$ the subgroup of~$G$ consisting of all elements of orders not exceeding~$2$:
\[
G_2 := \bigl\{ g\in G\st 2g = 0\bigr\}.
\]
Then~$G_2$ is an elementary $2$-group isomorphic to $\sfrac{G}{2G}$. It follows that the $2$-rank of~$G$ is the dimension of~$G_2$ treated as a $\FF_2$-linear space. In particular~$G_2$ is isomorphic to $\FF_2^{\rank_2(G)}$ and we have:

\begin{obs}\label{obs_rank2_as_log2}
With the above notation, $\rank_2 G = \log_2|G_2|$.
\end{obs}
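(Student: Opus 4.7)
The plan is straightforward, since almost all of the substance has already been recorded in the paragraph preceding the statement; the observation is really just a cardinality count on top of it. I would organise the argument in two steps.

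First, I would invoke the structure theorem for finite abelian groups to write $G \cong \bigoplus_{i=1}^{k} \ZZ/n_i\ZZ$. Under this decomposition, $G_2$ splits as the direct sum of the $2$-torsion subgroups of the cyclic summands, and $(\ZZ/n_i\ZZ)_2$ has order $2$ exactly when $n_i$ is even, and order $1$ otherwise. Hence $|G_2| = 2^{s}$, where $s$ is the number of cyclic summands of even order. By the equivalent description of $\rank_2 G$ recalled in the excerpt, $s = \rank_2 G$.

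Second, taking $\log_2$ of both sides yields $\log_2 |G_2| = \rank_2 G$, which is the claim. Alternatively, I could bypass the structure theorem entirely and use only the isomorphism $G_2 \cong G/2G$ noted above: that isomorphism makes $G_2$ an $\FF_2$-vector space of the same dimension as $G/2G$, namely $\rank_2 G$, so $|G_2| = 2^{\rank_2 G}$.

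There is no real obstacle here; the only thing to decide is how much to cite and how much to reprove. I would keep the write-up to one or two sentences, essentially observing that $G_2 \cong \FF_2^{\rank_2 G}$ has been established in the preceding discussion and the equality $\rank_2 G = \log_2 |G_2|$ follows by comparing cardinalities.
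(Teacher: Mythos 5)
Your proposal is correct and matches the paper's approach: the paper treats the observation as an immediate consequence of the isomorphism $G_2 \cong \FF_2^{\rank_2(G)}$ established in the preceding paragraph, which is exactly your second (and preferred) route. The structure-theorem count you sketch first is also fine, but the cardinality comparison $|G_2| = 2^{\rank_2 G}$ is all that is needed.
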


\begin{lem}\label{lem_rank_a}
Let $(G,+)$ be a finite abelian group and $a := \sum_{g\in G} g$ be the sum of all the elements in~$G$.
\begin{itemize}
\item If $\rank_2(G) \neq 1$, then $a = 0$.
\item If $\rank_2(G) = 1$, then the element~$a$ has order~$2$ in~$G$.
\end{itemize}
\end{lem}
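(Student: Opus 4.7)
The plan is to reduce the sum over $G$ to a sum over $G_2$ by pairing, and then evaluate the latter using the $\FF_2$-vector space structure provided by Observation \ref{obs_rank2_as_log2}.

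First, I would exploit the involution $g \mapsto -g$ on $G$. Its fixed points are precisely the elements of $G_2$, while every other element $g$ is paired with a distinct element $-g$, and $g + (-g) = 0$. Grouping the summation accordingly gives
\[
a = \sum_{g \in G} g = \sum_{g \in G_2} g,
\]
so the problem reduces to computing the sum of all elements in the elementary $2$-group $G_2$.

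Next, I would fix $n := \rank_2(G)$ and choose an $\FF_2$-basis $e_1, \ldots, e_n$ of $G_2$ (possible by Observation \ref{obs_rank2_as_log2}). Every element of $G_2$ is uniquely of the form $\sum_{i=1}^n c_i e_i$ with $c_i \in \{0,1\}$, and for each fixed index $i$ exactly half of the $2^n$ elements have $c_i = 1$. Therefore
\[
\sum_{g \in G_2} g \;=\; \sum_{i=1}^n 2^{n-1}\, e_i.
\]

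Finally, I would read off the three cases. If $n = 0$ the sum is empty and $a = 0$. If $n \geq 2$, then $2^{n-1}$ is even, and since each $e_i$ has order dividing $2$, every term $2^{n-1} e_i$ vanishes, so $a = 0$. If $n = 1$, the only term is $e_1$, which is a nonzero element of $G_2$ and therefore has order exactly $2$. There is no real obstacle: the pairing step is the only substantive idea, and the rest is a straightforward parity count in $\FF_2^n$.
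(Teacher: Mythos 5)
Your proof is correct and follows essentially the same route as the paper's: reduce to $\sum_{g\in G_2} g$ by pairing $g$ with $-g$, then count, for each basis direction of $G_2\cong\FF_2^n$, how many elements have a $1$ in that coordinate. The case analysis on $n=0$, $n=1$, $n\geq 2$ matches the paper's argument exactly.
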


\begin{proof}
As above let~$G_2$ be the subgroup of~$G$ consisting of elements of orders $\leq 2$. For every element $g\in G\setminus G_2$, we have $g \neq -g$. Combining such elements into pairs $(g, -g)$ we obtain
\[
a = \sum_{g\in G} g = \sum_{g\in G_2} g.
\]
In particular, if~$G$ has an odd number of elements, then $\rank_2(G) = 0$ and $a = 0$, as claimed. 

Now assume that $|G|$ is even and denote $m := \rank_2(G)$. Recall that the linear space~$G_2$ and~$\FF_2^m$ are isomorphic. If $m = 1$, then $\sum_{v\in \FF_2} v = 1$ and so $a = \sum_{g\in G_2}g$ is the unique element of~$G$ of order~$2$. On the other hand, if $m > 1$ then for every $i \leq m$ in the vectors space~$\FF_2^m$ there are precisely $2^{m-1}$ vectors~$v$ such that the $i$-th coordinate of~$v$ equals~$1$. It follows that the $i$-th coordinate of the sum $\sum_{v\in \FF_2^m}v$ equals $2^{m-1}\cdot 1 = 0$. This means that this sum is the null vector. Using our isomorphism $G_2 \cong \FF_2^m$ we see that $a = \sum_{g\in G_2}g = 0$, as desired.
\end{proof}

\begin{lem}
Let $C_1, \dotsc, C_k$ be cyclic groups of even orders and $c_1\in C_1, \dotsc, c_k\in C_k$ be the unique elements of order~$2$ in each group. Let $G := C_1\times \dotsb \times C_k$ and take a subgroup $\Gamma := \bigl\{ (0,\dotsc, 0), (c_1, \dotsc, c_k)\bigr\}$. Finally set $\OG := \sfrac{G}{\Gamma}$. Then
\[
\rank_2(\OG) =
\begin{cases}
k, &\text{if $4$ divides $|C_i|$ for every $i\leq k$,}\\
k-1, &\text{if there is $i\leq k$ such that $|C_i|\equiv 2\pmod{4}$.}
\end{cases}
\]
\end{lem}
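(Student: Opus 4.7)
By Observation~\ref{obs_rank2_as_log2} it suffices to compute $|\OG_2|$, where $\OG_2\subseteq\OG$ is the subgroup of elements of order at most~$2$. A class $\og\in\OG$ belongs to $\OG_2$ precisely when $2g\in\Gamma$ for any (equivalently, every) lift $g\in G$. Since the quotient map $G\to\OG$ has fibres of size $|\Gamma|=2$, we get $|\OG_2|=\tfrac12\bigl|\{g\in G\st 2g\in\Gamma\}\bigr|$, so the task reduces to counting the elements of~$G$ whose doubles lie in $\Gamma=\{0,\,(c_1,\dots,c_k)\}$.

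I would split this set according to whether $2g=0$ or $2g=(c_1,\dots,c_k)$, and count each part component-wise in the product $C_1\times\dots\times C_k$. The number of $g$ with $2g=0$ equals $2^k$, because each $C_i$ contains exactly two elements of order dividing~$2$, namely $0$ and $c_i$. For the second part I need, in every cyclic factor, the number of solutions of $2x=c_i$. Fixing a generator $\gamma_i$ of $C_i$ and writing $n_i:=|C_i|$ and $c_i=(n_i/2)\gamma_i$, this equation is solvable if and only if $c_i\in 2C_i$, which happens if and only if $n_i/2$ is even, i.e.\ if and only if $4\divides n_i$; in that case the kernel of doubling on $C_i$ has order~$2$, so there are exactly two solutions, and otherwise there are none.

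Putting the two counts together: when $4\divides n_i$ for every $i\leq k$, both parts contribute $2^k$, whence $|\OG_2|=\tfrac12(2^k+2^k)=2^k$ and $\rank_2(\OG)=k$; when some $n_i\equiv 2\pmod{4}$, the product for the second part has a zero factor, only the first part survives, and $|\OG_2|=\tfrac12\cdot 2^k=2^{k-1}$, giving $\rank_2(\OG)=k-1$. No genuine obstacle is expected; the only point that needs care is the solvability criterion $c_i\in 2C_i\Longleftrightarrow 4\divides|C_i|$ for the equation $2x=c_i$ in each cyclic factor, which is what separates the two cases of the lemma.
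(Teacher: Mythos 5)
Your argument is correct. It rests on the same two key observations as the paper's proof---that $\og = g+\Gamma$ lies in $\OG_2$ exactly when $2g\in\Gamma$, and that the equation $2x=c_i$ is solvable in $C_i$ precisely when $4$ divides $|C_i|$---but you organize the computation differently. The paper proceeds by a two-sided bound: it exhibits the subgroup $\OA$ (the image of $(C_1)_2\times\dotsb\times(C_k)_2$) to get $\rank_2\OG\geq k-1$ in general, adjoins the coset $\OB$ consisting of classes of tuples of order-$4$ elements to push the lower bound to $k$ when $4$ divides every $|C_i|$, and then caps the rank from above by invoking the fact that the $2$-rank of a quotient of $G$ cannot exceed the $2$-rank of $G$ itself (a fact the paper asserts without proof). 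You instead compute $|\OG_2|$ exactly in a single pass, counting the preimage $\{g\in G\st 2g\in\Gamma\}$ via the size-$2$ fibres of $G\to\OG$ and splitting the count coordinatewise according to whether $2g=0$ or $2g=(c_1,\dotsc,c_k)$. The two solution sets you count are precisely the paper's $A$ and $B$, so the substance is identical; what your version buys is that the exact count makes the upper bound automatic, so the quotient-rank inequality is never needed. All the individual steps (the fibre-counting identity, the two-element kernel of doubling in each $C_i$, and the solvability criterion $c_i\in 2C_i\Leftrightarrow 4\divides|C_i|$) check out, so this is a complete and slightly more self-contained route to the same conclusion.
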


\begin{proof}
Notice that if $\og\in \OG$ is an element of a form $\og = g + \Gamma$, then $\og\in \OG_2$ if an only if $2g\in \Gamma$. We first show that the $2$-rank of~$\OG$ is always greater or equal $k-1$. To this end consider the product:
\[
A := \bigl( C_1 \bigr)_2\times \dotsb \times \bigl( C_k \bigr)_2 = \{0,c_1\}\times \dotsb\times \{0,c_k\}.
\]
It is clear that~$A$ is a subgroup of~$G$ and~$\Gamma$ is a subgroup of~$A$. Identify the quotient group $\OA := \sfrac{A}{\Gamma}$ with the image of~$A$ in~$\OG$. The group~$A$ has precisely~$2^k$ elements, hence~$\OA$ has $2^{k-1}$ elements. Moreover $\OA\subset \OG_2$ and so we derive the lower bound for the $2$-rank: $\rank_2(\OG) = \log_2(\OG_2)\geq k-1$, by Observation~\ref{obs_rank2_as_log2}.

Now, assume that there is at least one cyclic summands of~$G$ whose order is not divisible by~$4$, say $|C_j|\equiv 2\pmod{4}$. Take an element $\og = g + \Gamma\in \OG_2$, where $g = (g_1, \dotsc, g_k)$. We have $2g\in \Gamma$ so that $2g_j = 0$ or $2g_j = c_j$. The latter is impossible since~$|C_j|$ is not divisible by~$4$. It follows that $g_j \in \{ 0, c_j\}$ and so $2g = (0,\dotsc, 0)$. This means that $g\in A$ and this implies that $\OG_2 = \OA$. Consequently $|\OG_2| = 2^{k-1}$, hence $\rank_2\OG = k-1$.

Finally, assume that~$4$ divides the orders of all the cyclic groups $C_1, \dotsc, C_k$. For every $i \leq k$ there are exactly two elements of order~$4$ in~$C_i$. Fix one of then and denote it~$d_i$. The other one is then $d_i + c_i$.  Consider a ``translation'' of the group~$A$:
\[
B := (d_1, \dotsc, d_k) + A = \bigl\{ (g_1, \dotsc, g_k)\st g_i\text{ has order }4\}
\]
and let~$\OB$ be the image of~$B$ in~$\OG$. We then have $|B| = |A| = 2^k$ and $|\OB| = |\OA| = 2^{k-1}$. Moreover $\OA\cap \OB = \emptyset$ and $\OA\cup \OB \subset \OG_2$. Consequently~$\OG_2$ has at least $2^k$ elements and so $\rank_2(\OG)\geq k$. On the other hand, the $2$-rank of the quotient group~$\OG$ cannot exceed the $2$-rank of~$G$ itself, and the latter equals~$k$, the number of cyclic summands. All in all, we obtain $\rank_2(\OG) = k$, as claimed.
\end{proof}

From now on let $p,q$ be two distinct (but fixed) prime numbers. Denote by $G$ the direct product $\FF_p^\times\times \FF_q^\times$ of invertibles modulo~$p$ and~$q$. Consider a subgroup $\Gamma := \bigl\{ (1,1), (-1,-1)\bigr\}$ of~$G$ and set $\OG := \sfrac{G}{\Gamma}$. Apply the above lemma to~$\OG$.

\begin{cor}\label{cor_pq_rank}
With the above notation:
\begin{itemize}
\item If $p\equiv q\equiv 1\pmod{4}$, then $\rank_2\OG > 1$.
\item If either $p\equiv 3\pmod{4}$ or $q\equiv 3\pmod{4}$, then $\rank_2\OG = 1$.
\end{itemize}
\end{cor}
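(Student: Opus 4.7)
The plan is to obtain the corollary directly as the $k = 2$ specialization of the preceding lemma, taking $C_1 := \FF_p^\times$ and $C_2 := \FF_q^\times$. Both multiplicative groups are cyclic of even orders $p-1$ and $q-1$ (the two congruence alternatives in the corollary force both $p$ and $q$ to be odd), and the unique element of order~$2$ in each is $-1$. Hence the distinguished subgroup appearing in the hypothesis of the lemma is exactly our $\Gamma = \bigl\{(1,1),(-1,-1)\bigr\}$, so the lemma applies without any further setup.

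It then remains only to match the two branches of the lemma with the two cases of the corollary. The divisibility condition ``$4 \divides |C_i|$ for every $i \leq 2$'' amounts to $p \equiv q \equiv 1 \pmod{4}$, so the first branch of the lemma yields $\rank_2 \OG = k = 2 > 1$. On the other hand, the existence of an $i$ with $|C_i| \equiv 2 \pmod 4$ is exactly the condition that at least one of $p, q$ is $\equiv 3 \pmod 4$, and the second branch then produces $\rank_2 \OG = k - 1 = 1$.

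I do not foresee any genuine obstacle here: the corollary is essentially a direct substitution into the previous lemma. The only piece of bookkeeping worth making explicit is that odd primes split cleanly into the two residue classes $1$ and $3$ modulo~$4$, so the two cases listed in the corollary are indeed exhaustive.
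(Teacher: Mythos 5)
Your proposal is correct and matches the paper exactly: the corollary is obtained by applying the preceding lemma with $k=2$, $C_1 = \FF_p^\times$, $C_2 = \FF_q^\times$ (cyclic of orders $p-1$, $q-1$, each with $-1$ as its unique element of order~$2$), and translating the divisibility conditions into congruences modulo~$4$. The paper leaves this substitution implicit, so your write-up is if anything slightly more explicit than the original.
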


Borrowing an idea from \cite{Rousseau91}, we consider a set~$\CL$ of representatives of all cosets of~$\Gamma$ in~$G$. Let:
\[
\CL := \Bigl\{ (k\bmod p, k\bmod q) : 0 < k < \tfrac{pq}{2}, p\ndivides k, q\ndivides k \Bigr\}
\]
The following fact was proved in \cite{Rousseau91}. We reprove it here only to make this paper completely self-contained.

\begin{lem}\label{lem_prod_L}
The product of all elements of~$\CL$ equals
\[
\Bigl( (-1)^{\frac{q-1}{2}}\cdot \legendre{q}{p},\ (-1)^{\frac{p-1}{2}}\cdot \legendre{p}{q} \Bigr).
\]
\end{lem}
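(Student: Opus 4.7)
The plan is to compute the two coordinates of the product $\prod\CL$ separately, exploiting the fact that the definition of $\CL$ is manifestly symmetric in $p$ and $q$; hence a single argument will handle both coordinates after swapping the roles of the two primes. I will focus on the first coordinate, that is, on the residue $P := \prod_{k\in T} k \pmod{p}$, where $T := \{ k\in \ZZ : 0 < k < pq/2,\ p\ndivides k,\ q\ndivides k\}$. Since $p$ and $q$ are odd, $pq$ is odd and $pq/2$ is a half-integer, so $T\subseteq \{1, 2, \dotsc, (pq-1)/2\}$.

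The main idea is to factor $T = T_1\setminus T_2$, where $T_1 := \{ k : 0 < k < pq/2,\ p\ndivides k\}$ and $T_2 := \{ q, 2q, \dotsc, \tfrac{p-1}{2} q \}$ collects precisely the multiples of $q$ inside $T_1$. To evaluate $\prod_{k\in T_1} k \pmod p$ I will slice the interval $[1, (pq-1)/2]$ into $(q-1)/2$ complete blocks of length $p$, namely $[(j-1)p+1, jp]$ for $j = 1, \dotsc, (q-1)/2$, followed by a trailing partial block $[\tfrac{(q-1)p}{2}+1,\ \tfrac{pq-1}{2}]$. After discarding the unique multiple of $p$ from each complete block, the residues that remain form a permutation of $\FF_p^\times$, so each complete block contributes $(p-1)! \equiv -1\pmod p$ by Wilson's theorem. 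The trailing block contains no multiple of $p$ and its elements reduce modulo $p$ to $1, 2, \dotsc, (p-1)/2$. Combined, this gives $\prod_{k\in T_1} k \equiv (-1)^{(q-1)/2}\cdot \bigl(\tfrac{p-1}{2}\bigr)! \pmod{p}$, while the product over $T_2$ is directly $q^{(p-1)/2}\cdot \bigl(\tfrac{p-1}{2}\bigr)!$.

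Dividing in $\FF_p^\times$, the factorials cancel and leave $P \equiv (-1)^{(q-1)/2} / q^{(p-1)/2} \pmod p$. Euler's criterion rewrites $q^{(p-1)/2}$ as the Legendre symbol $\legendre{q}{p}$, which being $\pm 1$ is its own inverse, so $P = (-1)^{(q-1)/2}\legendre{q}{p}$ as claimed. The second coordinate is obtained by repeating the argument with $p$ and $q$ swapped. I expect the only mildly delicate point to be the boundary bookkeeping at $pq/2$: one has to verify carefully that the last complete block ends at $\tfrac{q-1}{2}p$, that the trailing block has exactly $\tfrac{p-1}{2}$ integers, and that none of them is divisible by $p$. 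All three assertions reduce to $pq$ being odd, so modulo this bit of care the overall calculation is routine.
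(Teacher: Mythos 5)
Your proposal is correct and follows essentially the same route as the paper: write the product over $\CL$ as (product over $k<pq/2$ with $p\ndivides k$) divided by (product of the multiples of $q$ in that range), evaluate the numerator by slicing into $\tfrac{q-1}{2}$ full blocks of length $p$ (each giving $-1$ by Wilson) plus a half-block giving $(\tfrac{p-1}{2})!$, and finish with Euler's criterion. Your extra care about the boundary at $pq/2$ and the count $\tfrac{p-1}{2}$ in the trailing block is exactly the bookkeeping the paper performs implicitly.
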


\begin{proof}
The definition of~$\CL$ is symmetric in~$p,q$ and so is the assertion of the lemma. Hence it suffices to prove that the equality holds just for one coordinate. For instance, take a look at the first coordinate:
\begin{multline*}
\prod_{\mathclap{(k,k)\in \CL}} k
= \frac{\phantom{\quad}\prod\limits_{\mathclap{\substack{k < \sfrac{pq}{2}\\ p\ndivides k}}} k\phantom{\quad}}{\prod\limits_{\mathclap{\substack{k < \sfrac{pq}{2}\\ q\divides k}}} k}\\
= \frac{\biggl(\prod\limits_{0 < k < p}k\biggr)\cdot \biggl(\prod\limits_{0 < k < p}(p+k)\biggr)\dotsm \biggl(\prod\limits_{0 < k < p}\bigl(\frac{q-3}{2}\cdot p + k\bigr)\biggr)}{(q)(2q)\dotsm (\frac{p-1}{2}\cdot q)}\cdot \prod_{0<k<\frac{p}{2}} \Bigl(\frac{q-1}{2}\cdot p + k\Bigr)
\end{multline*}
Each product in the numerator evaluates to $(p-1)!$ hence equals $-1$ by Wilson's theorem. Analogously, the last product evaluates to the factorial $(\frac{p-1}{2})!$. Finally, the denominator equals 
\[
(q)(2q)\dotsm (\tfrac{p-1}{2}\cdot q) = q^{\frac{p-1}{2}}\cdot \bigl(\tfrac{p-1}{2}\bigr)! = \legendre{q}{p}\cdot \bigl(\tfrac{p-1}{2}\bigr)!
\]
by Euler's criterion. All in all, the whole formula simplifies to $(-1)^{\frac{q-1}{2}}\cdot \legendre{q}{p}$.
\end{proof}

We are now ready to present the new proof of the quadratic reciprocity law.

\begin{proof}[Proof of the quadratic reciprocity law]
Consider possible remainders of $p,q$ modulo~$4$. First assume that $p\equiv q\equiv 1\pmod{4}$, then $\rank_2\OG > 1$ by Corollary~\ref{cor_pq_rank} and so Lemma~\ref{lem_rank_a} implies that the product of all elements from~$\CL$ lies in~$\Gamma$. Thus, both coordinates are either simultaneously equal~$1$ or simultaneously equal  $-1$. In particular the first coordinate is the same as the second one, hence we have
\[
(-1)^{\frac{q-1}{2}}\legendre{q}{p} = (-1)^{\frac{p-1}{2}}\legendre{p}{q}  
\]
by Lemma~\ref{lem_prod_L}. This shows that $\legendre{q}{p} = \legendre{p}{q}$.

Conversely assume that at least one of the two primes is congruent to~$3$ modulo~$4$. Therefore $\rank_2\OG = 1$ and so by Lemma~\ref{lem_rank_a} the product of elements from~$\CL$ has order~$2$ in the quotient group~$\OG$. Thus the product equals $(1,-1)\cdot \Gamma = (-1,1)\cdot \Gamma$. In particular the two coordinates are opposite to each other:
\[
(-1)^{\frac{q-1}{2}}\legendre{q}{p} = -(-1)^{\frac{p-1}{2}}\legendre{p}{q} .
\]
Now, if $p\not\equiv q\pmod{4}$, then $(-1)^{\frac{q-1}{2}} = - (-1)^{\frac{p-1}{2}}$ and again we have $\legendre{q}{p} = \legendre{p}{q}$. On the other hand, if $p\equiv q\equiv 3\pmod{4}$, then $(-1)^{\frac{q-1}{2}} = (-1)^{\frac{p-1}{2}}$ and so $\legendre{q}{p} = -\legendre{p}{q}$. This concludes the proof.
\end{proof}


\begin{thebibliography}{1}

\bibitem{Rousseau91}
G.~Rousseau.
\newblock On the quadratic reciprocity law.
\newblock {\em J. Austral. Math. Soc. Ser. A}, 51(3):423--425, 1991.

\end{thebibliography}

\end{document}